\documentclass[12pt, reqno]{amsart}
\usepackage{amsmath, amsthm, amscd, amssymb, graphicx, color}
\usepackage{lmodern}
\usepackage{mathrsfs}
\usepackage{array}
\usepackage{tabularx}
\usepackage[colorlinks=true, linkcolor=blue, citecolor=blue, urlcolor=blue]{hyperref}
\usepackage{enumitem} 
\newtheorem{theorem}{Theorem}[section]
\newtheorem{lemma}[theorem]{Lemma}

\newtheorem{corollary}[theorem]{Corollary}
\theoremstyle{definition}

\theoremstyle{remark}
\newtheorem{remark}[theorem]{Remark}
\numberwithin{equation}{section}

\newcommand{\B}{\mathcal{B}}
\newcommand{\D}{\mathbb{D}}
\newcommand{\h}{\mathcal{H}}
\newcommand{\m}{\mathcal{M}}

\newcommand{\1}{\theta H^2}
\newcommand{\2}{K_\theta}

\newcommand{\ho}{\overline{H_0^2}}

\newcommand{\li}{L^\infty}
\newcommand{\s}{\mathcal{L}at}
\usepackage{tikz-cd}

\begin{document}

	\title[Invariance Preserving Conjugations on the Hardy Space]{Invariance Preserving Conjugations on the Hardy Space}
	
	\author[Arup Chattopadhyay and Supratim Jana]{Arup Chattopadhyay and Supratim Jana}
    
	\maketitle
	
	\paragraph{\textbf{Abstract}}
           We obtain a complete characterization of the class of conjugation operators $C$ on $H^2$ that map shift-invariant subspaces to shift-invariant subspaces. Alongside, we also obtain the existence and non-existence of conjugations that send shift-invariant subspaces to coinvariant subspaces.

	\vspace{0.5cm}
    
	\paragraph{\textbf{Keywords}} Shift operator, Inner function, Model space, Conjugation operators.
    
	\vspace{0.5cm}
    
	\paragraph{\textbf{Mathematics Subject Classification (2020)}} Primary: 47A15; Secondary: 30H10, 47B35.

	\section{{Introduction and Preliminaries}}

        Conjugations have been intensively studied in recent years due to their significant role in operator theory, matrix analysis, and other branches of science. The origins of this subject can be traced back to mathematical physics. Especially since the introduction of \emph{Complex Symmetric} operators by Garcia-Putinar in \cite{GP1, GP2}, this topic has become an active area of research and has gained considerable attention. We begin by introducing the notations and terminology that we use throughout this article.

        Let $\h$ be a (separable) complex Hilbert space and $\B(\h)$ denote the algebra of bounded linear operators acting on $\h$. An antilinear map $C: \h \rightarrow \h$ is said to be a \emph{conjugation} if it is an isometric involution, that is,
                \begin{align*}
                  & \langle C f, C g\rangle = \langle g, f \rangle \quad \text{and} \quad  C^2 h = h \qquad \text{for all $f,g ,h\in \h$}.
                \end{align*}
        An operator $T\in \B(\h)$ is said to be $C-$symmetric if $CTC=T^*$ or equivalently $CT = T^*C$, where $T^*$ is the adjoint operator of $T$. And this operator $T$ is called complex symmetric if it is $C-$ symmetric with respect to some conjugation. Obtaining a characterization of conjugations with certain properties is of great interest and has motivated several mathematicians to work in this direction; see, for example, \cite{BarTim, GW, Gu, JKKL14, LZ,  NSW15, NSW14, NSW20, ZL}.

         Let $\mathbb{D}:=\{ z \in \mathbb{C}: |z| < 1 \}$ denote the unit disk in the complex plane and $\mathbb{T}:=\{z\in\mathbb{C}:|z|=1\}$ its boundary. The \emph{Hardy-Hilbert space} $H^2(\mathbb{D})$ consists of analytic functions on $\mathbb{D}$ with square summable Taylor coefficients. At the same time, $H^\infty(\D)$ is the Banach algebra of bounded analytic functions on $\mathbb{D}$. Let $L^2:=L^2(\mathbb{T})$ be the Hilbert space of square-integrable functions, and $L^\infty:=L^\infty(\mathbb{T})$ denote the von Neumann algebra of essentially bounded functions with respect to the normalized Lebesgue measure on $\mathbb{T}$.

         By Fatou's theorem \cite{NKN, VVP}, the space $H^2(\mathbb{D})$ is identified (via non-tangential boundary limit) as a closed subspace $H^2(\mathbb{T})$ of $L^2$ consisting precisely of those functions whose negative Fourier coefficients vanish. Accordingly, we shall use $H^2$ to denote either $H^2(\mathbb{D})$ or $H^2(\mathbb{T})$, depending on the context, and write $H^\infty:=H^2\cap L^\infty$. A bounded analytic function $\theta$ is called inner if its boundary value is unimodular almost everywhere on $\mathbb{T}.$

         The Toeplitz operator $T_\phi$ (introduced by Brown-Halmos \cite{BH}) is defined as:
         $$ T_\phi: H^2 \rightarrow H^2 \quad \text{ by }\quad T_\phi (f)=P(\phi f) \quad \text{ for $\phi\in L^\infty$ },$$ where $P: L^2 \rightarrow H^2$ is the Szeg\"o (orthogonal) projection.

         The elementary Toeplitz operator $T_z$ is commonly denoted by $S$ and is called the \emph{(forward) shift}, playing a fundamental role in function theory. Beurling in \cite{AB} (1949) provided a complete characterization of all $ S$- invariant subspaces \big(also called the \emph{Lattice} of $S$, abbreviated as $\s(S)$\big) of $H^2$, which are precisely of the form $\1,$ where $\theta$ is an inner function. In other words the lattice $\mathcal{L}at(S) = \{ \theta H^2 : \theta \text{ is an inner function } \}$. Consequently, we have the collection of the coinvariant subspaces $\s(S^*) = \{ H^2 \ominus \theta H^2 : \theta \text{ is an inner function } \}$. These subspaces are called model spaces, denoted by $\2$ and determined as $H^2 \cap \theta \overline{H^2_0}$.
         
         Conjugations play a very prominent role in the study of invariant subspaces, model spaces, and several operators acting on them. A fundamental example is the canonical conjugation on $L^2$, defined as 
         \begin{equation}\label{5e2}
             J^\star: L^2 \rightarrow L^2  \quad \text{by} \quad (J^\star f)(z)=f^*(z) = \overline{f(\bar z)}.
         \end{equation}
         The restriction of the conjugation $J^\star$ on $H^2$ is again a conjugation; we denote this by $ J$, and it possesses several noteworthy properties.
         \begin{enumerate}[label=(\roman*)]
             \item $J$ commutes with the orthogonal projection $P$ and the shift operator $S,$ that is, $ JP=PJ $ and $ JS=SJ.$ \label{5p1}
             \vspace{0.1in}
             \item $J(\theta H^2) = \theta^* H^2,$ as well as $J(K_\theta) = K_{\theta^*}$, that is, $J$ preserves shift-invariant and coinvariant structure of subspaces. \label{5p2}
         \end{enumerate}

         Hence, it is natural to ask whether $J$ is unique or whether there exist other conjugations enjoying the same invariant-subspace preserving property. We are therefore led to investigate the following question.
         \vspace{0.1in}

        \textbf{Question}\label{5q1}
             \emph{Characterize all the conjugations $C$ on $H^2$ such that $C(M)$ is shift-invariance whenever $M$ is shift invariant.} 

             \vspace{0.1in}

         To address this question, our approach relies on the fundamental results of Brown-Halmos \cite{BH}, Sarason \cite{S1}, and Crofoot \cite{C}. Together, these results provide the key ingredients needed for our argument and ultimately lead to the following conclusion.

         \begin{theorem}\label{5th1}
            Let $C: H^2 \longrightarrow H^2$ be a conjugation on $H^2$ and $M$ be a shift-invariant subspace. Then,  $C(M)$ is shift-invariant if and only if
             $$ C=  \lambda U_a J \qquad \text{ with $a\in (-1,1)$ }, $$
             where $\lambda$ is a unimodular constant and $U_a: H^2 \rightarrow H^2$ is the unitary operator, given by $(U_af) (z) = \frac{\sqrt{1-|a|^2}}{1-\bar{a}z} f\bigl(\frac{z-a}{1-\bar{a}z}\bigr)$, for $a \in \mathbb{D}$.
         \end{theorem}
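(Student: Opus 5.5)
The plan is to reduce the problem to unitary operators that preserve the lattice $\s(S)$, and then to identify such unitaries as weighted composition operators induced by disc automorphisms.

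\emph{Sufficiency.} By property \ref{5p2}, $J$ preserves $\s(S)$, so it is enough to show that $U_a$ does. For an automorphism $\varphi_a(z)=\frac{z-a}{1-\bar a z}$ and an inner $\theta$, we have $U_a(\theta f)=(\theta\circ\varphi_a)\,U_a f$. Here $\theta\circ\varphi_a$ is inner and $U_a$ is onto $H^2$, so $U_a(\theta H^2)=(\theta\circ\varphi_a)H^2$. It remains to check that $\lambda U_aJ$ is a conjugation when $a\in(-1,1)$. A direct computation gives $JU_aJ=U_{\bar a}$ and $U_a^{-1}=U_{-a}$. When $a$ is real, however, $U_aU_a\neq I$ in general, so I expect the correct identity to be $U_a^2=I$ for the involutive version $\varphi_a(z)=\frac{a-z}{1-\bar a z}$. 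I will fix this normalization (possibly up to the sign $-U_{-a}$) against the paper's convention. The constant $\lambda$ is harmless, since $\lambda U_aJ\lambda U_aJ=|\lambda|^2U_aJU_aJ$.

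\emph{Necessity.} Assume $C$ maps $\s(S)$ into $\s(S)$. Then $V:=CJ$ is a linear unitary, and since $J$ maps $\s(S)$ onto $\s(S)$, $V$ maps $\s(S)$ into $\s(S)$. The same holds for $V^{-1}=JC$, because $C=C^{-1}$. Hence $V$ induces a lattice bijection that preserves inclusion and codimension. The invariant subspaces of codimension one are exactly $\varphi_wH^2=\{f: f(w)=0\}$ for $w\in\D$. Therefore $V(\varphi_wH^2)=\varphi_{\tau(w)}H^2$ for a bijection $\tau:\D\to\D$. Taking orthocomplements, $Vk_w=c(w)k_{\tau(w)}$, where $k_w$ denotes the Szeg\H{o} kernel. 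This gives
\[
(V^*f)(w)=\overline{c(w)}\,f(\tau(w)).
\]
Set $\psi:=V^*1=\bar c$. Then $\psi$ is analytic and $\tau=V^*z/\psi$, so $V^*$ is a weighted composition operator $W_{\psi,\tau}$. Because $\tau$ maps $\D$ into $\D$ and is meromorphic, its singularities are removable, so $\tau$ is analytic. Applying the same argument to $V^{-1}$ shows that $\tau$ has an analytic inverse, so $\tau$ is a disc automorphism $\mu\varphi_b$. Unitarity then forces $|\psi|$ to match $|\tau'|^{1/2}$ on $\mathbb{T}$, and this is where I would invoke the known description of unitary weighted composition operators on $H^2$, in the spirit of \cite{BH}, \cite{C}. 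The conclusion is $V=\lambda U_bR_\mu$, where $R_\mu$ is the rotation $f\mapsto f(\mu z)$.

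\emph{Main obstacle.} The hard step is imposing $C^2=I$ on $C=\lambda U_bR_\mu J$ and reducing the family to the stated form. The involution condition reads $VJVJ=I$, that is, $JVJ=V^{-1}$. Using $JU_bJ=U_{\bar b}$ and $JR_\mu J=R_{\bar\mu}$, this becomes an equation between automorphisms, $\overline{\tau(\bar z)}=\tau^{-1}(z)$, together with a scalar condition.

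I would first solve this equation explicitly in the parameters $(\mu,b)$. One must be careful here, because pure rotations already give conjugations: $R_\mu J$ is a conjugation for every unimodular $\mu$. Such solutions must either be absorbed into the paper's parametrization, for instance by conjugating with a rotation, or they show that the statement needs to be read modulo this normalization. I expect the genuine work to lie in this bookkeeping: checking that every solution of the functional equation is of the form $\lambda U_aJ$ in the paper's sense. The lattice-theoretic reduction above should be comparatively routine.
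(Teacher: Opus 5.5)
Your necessity reduction is sound, and it takes a different route from the paper. The paper applies Sarason's lemma to $CSC$, gets $CSC=T_\theta$ with $\theta$ inner and $\dim K_\theta=1$, and then uses the Brown--Halmos commutant theorem. You instead use that $V=CJ$ induces an inclusion- and codimension-preserving bijection of $\mathcal{L}at(S)$. Such a bijection permutes the lines $\mathbb{C}k_w$, so $V^*$ is a unitary weighted composition operator with an automorphic symbol. Two small simplifications: $\psi=\bar c$ never vanishes, so $\tau$ is analytic outright; and $V^*$ composed with the inverse of $f\mapsto\sqrt{\tau'}\,f\circ\tau$ is a unitary analytic multiplier, hence a unimodular constant. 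Both routes arrive at $C=\lambda U_bR_\mu J$.

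The gap is the final step you call bookkeeping. It cannot be completed, because the statement is false, and your own remark about $R_\mu J$ already refutes it. Take $Cf(z)=\overline{f(-\bar z)}$, i.e.\ $\sum c_nz^n\mapsto\sum(-1)^n\overline{c_n}z^n$. This is a conjugation, and it maps $\theta H^2$ onto $\theta^\sharp H^2$, where $\theta^\sharp(z)=\overline{\theta(-\bar z)}$ is inner. Suppose $C=\lambda U_aJ$. Applying both sides to $1$ gives $\lambda\sqrt{1-|a|^2}/(1-\bar az)\equiv 1$, so $a=0$, $\lambda=1$ and $C=J$, which contradicts $Cz=-z$.

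The ``if'' direction also fails, as you suspected. For $0\neq a\in(-1,1)$ we have $(\lambda U_aJ)^2=U_aU_{\bar a}=U_a^2\neq I$, since $U_a^{-1}=U_{-a}$.

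The paper reaches its form only through errors. First, it writes the inner function with one-dimensional model space as $\mu_a$ rather than $\gamma\mu_a$ with $|\gamma|=1$; this is exactly what discards your rotations. Second, it uses $\mu_a\circ\mu_a=\mathrm{id}$, $U_a^2=I$ and $U_aSU_a=T_{\mu_a}$. In fact $\mu_a^{-1}=\mu_{-a}$, $U_a^*=U_{-a}$, and the correct identity is $U_aSU_a^*=T_{\mu_a}$.

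Once finished, your method yields a corrected classification. From $JU_bJ=U_{\bar b}$, $JR_\mu J=R_{\bar\mu}$ and $R_\mu U_c=U_{\bar\mu c}R_\mu$ you get $C^2=U_bR_\mu U_{\bar b}R_{\bar\mu}=U_bU_{\bar\mu\bar b}$. Hence $C^2=I$ if and only if $\bar\mu\bar b=-b$. The conjugations preserving $\mathcal{L}at(S)$ are therefore exactly the following, with $|\lambda|=1$:
\begin{enumerate}
\item $\lambda R_\mu J$ with $|\mu|=1$;
\item $\lambda U_bR_{-\bar b/b}J$ with $0<|b|<1$.
\end{enumerate}
Within the second family:
\begin{enumerate}
\item for purely imaginary $b$ it is $\lambda U_bJ$ with the paper's $U_b$;
\item for real $b$ it is $\lambda\widetilde U_bJ$, where $\widetilde U_bf(z)=\frac{\sqrt{1-b^2}}{1-bz}f\bigl(\frac{b-z}{1-bz}\bigr)$ is the involutive Crofoot transform.
\end{enumerate}
Moreover, every solution equals $\lambda R_\omega\widetilde U_rJR_\omega^*$ for some $|\omega|=1$ and $r\in[0,1)$. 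So the stated family is correct only after switching to the involutive normalization, and only up to conjugation by rotations, not as an equality of operators. Instead of trying to force the solutions into the stated form, you should state and prove this corrected result.
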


         Since conjugations are involution maps, it does also preserve the coinvariant property of subspaces. Section 2 is devoted to the necessary background and systematic establishment of the above theorem. Section 3 concerns the characterization of conjugations on $H^2$ that map a shift-invariant subspace $\eta H^2$ onto a coinvariant subspace $K_\theta$. In addition, it describes certain intertwining conjugations on the model space $\2$. Alongside the principal developments, this article ends with a special remark concerning a particular conjugation on $L^2$.

      \section{Invariant to Invariant conjugations} 

        First, we recall a result of Sarason \cite{S1} which will play a crucial role in the proof of Theorem \ref{5th1}. For the convenience and clarity of the readers, we present a detailed proof below.

        \begin{lemma}\label{5l1}
            Let $T: H^2 \rightarrow H^2$ be a bounded linear operator that leaves every shift-invariant subspace invariant, that is, $$T(\1) \subset \1 \quad \text{ for every inner function $\theta.$ }$$  Then $T$ is an analytic Toeplitz operator.
        \end{lemma}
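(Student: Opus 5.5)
The plan is to show that $T$ commutes with the shift $S$. A bounded operator commuting with $S$ is an analytic Toeplitz operator $T_\phi$ with $\phi = T1 \in H^\infty$; this is the classical Brown--Halmos commutant theorem. The key tool is the family of Blaschke factors. For $\lambda\in\D$ let $b_\lambda(z)=\frac{z-\lambda}{1-\bar\lambda z}$. Then $b_\lambda H^2=\{f\in H^2: f(\lambda)=0\}$, and it is shift-invariant. The hypothesis therefore says that $f(\lambda)=0$ implies $(Tf)(\lambda)=0$. Stated with reproducing kernels $k_\lambda(z)=(1-\bar\lambda z)^{-1}$: if $f\perp k_\lambda$ then $Tf\perp k_\lambda$, so $T^*k_\lambda\in (k_\lambda^\perp)^\perp=\mathbb{C}k_\lambda$.

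\textbf{Step 1: eigenvalues.} For each $\lambda\in\D$ there is a scalar $\overline{\phi(\lambda)}$ with $T^*k_\lambda=\overline{\phi(\lambda)}k_\lambda$. Evaluating,
$$(Tf)(\lambda)=\langle Tf,k_\lambda\rangle=\langle f,T^*k_\lambda\rangle=\phi(\lambda)f(\lambda)$$
for all $f\in H^2$. Taking $f=1$ gives $\phi=T1\in H^2$, so $\phi$ is analytic. The identity $Tf=\phi f$ now holds pointwise on $\D$ for every $f\in H^2$.

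\textbf{Step 2: boundedness of $\phi$.} We have $|\phi(\lambda)|\,\|k_\lambda\|=\|T^*k_\lambda\|\le\|T\|\,\|k_\lambda\|$, so $\|\phi\|_\infty\le\|T\|$. Hence $\phi\in H^\infty$ and $T=T_\phi$, because $P(\phi f)=\phi f$ for analytic $\phi$. As a consistency check, $TS=ST$ now follows as well.

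The only delicate point is Step 1: correctly identifying $(b_\lambda H^2)^\perp=\mathbb{C}k_\lambda$, which is the one-dimensional model space $K_{b_\lambda}$. Once that is in place, the rest is routine.
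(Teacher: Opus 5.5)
Your proof is correct, but after a shared first step it takes a different route from the paper. Both arguments begin the same way: the hypothesis applied to $b_\lambda H^2$ forces $T^*k_\lambda\in K_{b_\lambda}=\mathbb{C}k_\lambda$, so every reproducing kernel is an eigenvector of $T^*$.

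The paper uses this only to check that $T^*S^*k_\lambda=S^*T^*k_\lambda$. It then extends the relation to $TS=ST$ by density of the kernels, and quotes the Brown--Halmos commutant theorem to conclude that $T$ is an analytic Toeplitz operator.

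You instead read off $T$ directly as the multiplication operator $f\mapsto\phi f$ with $\phi=T1$, via $(Tf)(\lambda)=\langle f,T^*k_\lambda\rangle=\phi(\lambda)f(\lambda)$. You then get $\phi\in H^\infty$ with $\|\phi\|_\infty\le\|T\|$ from the eigenvalue bound $|\phi(\lambda)|\le\|T^*\|$.

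What your route buys: it is self-contained and avoids the commutant theorem altogether, in effect reproducing the standard multiplier argument that underlies it. It also identifies the symbol and its norm bound explicitly, and commutation with $S$, the paper's main step, becomes a by-product. The paper's version is shorter only because it treats Brown--Halmos as a black box.

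Both arguments use the hypothesis only for the subspaces $b_\lambda H^2$, $\lambda\in\D$.
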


        \begin{proof}
            Let $M$ be a closed subspace of $H^2$. Then $M$ is an invariant subspace of $T$ if and only if $M^\perp$ is invariant under $T^*$, that is, $T(M)\subset M \iff T^*(M^\perp) \subset M^\perp.$ Consequently, in this context $$T^* (\2) \subset \2$$ for every inner function $\theta$.

            It is well known that each $a\in \D$, the function $\frac{1}{1-\bar{a}z} (:= v_a, \text{ say })$ is an eigen vector corresponding to the eigen value $a$, that is, $S^* v_a = a v_a$. Now, observe that $\mathbb{C} \{v_a\} = K_{\mu_a} $, where $\mu_a$ is the disk automorphism $\frac{z-a}{1-\bar{a}z}$, also a single Blasschke factor with zero at $a$.

            Since $K_{\mu_a}$ is one-dimensional, we have $ T^* v_a = \lambda_a v_a, $ for every $a\in \D.$ Therefore, for every $a\in \D$, we have
            $$ T^* S^* (v_a) = \lambda_a a v_a = S^*T^* (v_a) . $$

            Since $v_a$ is a reproducing kernel for $H^2$ at the point $a$, $\bigvee_{a\in \D} \{ v_a \} = H^2.$ Thus, $T^*$ commutes with $S^*$, and equivalently $TS=ST$. Hence, the conclusion follows from Theorem $7$ of Brown-Halmos \cite{BH}.
        \end{proof}

        Now, we are ready to prove Theorem~\ref{5th1}.
        \vspace{0.1in}

       {\bf Proof of Theorem~\ref{5th1}}

       \vspace{0.1in}

       Let $C: H^2 \rightarrow H^2$ be the conjugation such that $C(\1) \in \s(S),$ for every inner function $\theta$. Then, it is readily seen that $C(\s(S))=\s(S).$ 
       
       For $M\in \s(S)$, we have 
       \begin{align*}
           & S(M) \subset M \quad \implies SC(CM)) \subset C(CM) \quad \implies CSC(CM)\subset CM.
       \end{align*}
       According to our hypothesis, $C(M)\in \s(S)$, and since $CSC$ is a linear operator, by the Lemma~\ref{5l1}, we have $CSC=T_\phi,$ for some $\phi \in H^\infty$. 
       
       But $||CSC h|| = ||h|| \text{ for all $h\in H^2.$ } $ So, $T_\phi$ is an isometry and thus $\phi = \theta,$ for some inner function $\theta.$

       Moreover, $(CSC)^* = CS^*C,$ and $Ker(CS^*C)$ is one dimensional, so is the dimension of $(Ker T_{\bar{\theta}})$. Therefore, inner function $\theta$ is a single Blaschke factor $\frac{z-a}{1-\bar{a}z}$, for some $a\in \D$ (disk-automorphism). Thus, 
       \begin{equation} \label{5e3}
           CSC = T_{\mu_a} \qquad \text{ for } \qquad \mu_a = \frac{z-a}{1-\bar{a}z}.
       \end{equation}
       Now, for $a\in \D,$ we have the unitary operator $U_a: H^2 \rightarrow H^2$, defined by 
       $$ (U_af) (z) = \frac{\sqrt{1-|a|^2}}{1-\bar{a}z} f\bigl(\frac{z-a}{1-\bar{a}z}\bigr),$$ 
       due to Crofoot \cite{C}. Observe that $\mu_a $ is an involution, that is, $\mu_a \circ \mu_a = id$, and $U_a^2 = Id \implies U_a^{-1}= U_a^* = U_a$. 
       
       Now, with the following direct computation, we have:
       \begin{align*}
            \big(U_aS U_a\big) (h)(z)  & = U_aS \Big( \frac{\sqrt{1-|a|^2}}{1-\bar{a}z} h\bigl(\frac{z-a}{1-\bar{a}z}\bigr) \Big) \\
            & = U_a \Big( z \frac{\sqrt{1-|a|^2}}{1-\bar{a}z} h\bigl(\frac{z-a}{1-\bar{a}z}\bigr) \Big)\\
            & = \frac{z-a}{1-\bar{a}z} h(z)\\ & = (T_{\mu_a} h)(z).
       \end{align*}
       Therefore, form the Equation \eqref{5e3}, we have
       \begin{align*}
           CSC =  T_{\mu_a} & \iff SCJ =  CT_{\mu_a}J \quad \text{ (recall } J(f) = f^*, \text{ for } f\in H^2)\\
           & \iff SCJ = CJ T_{\mu_{\bar{a}}}\\
           & \iff SCJ = CJ U_{\bar{a}} S U_{\bar{a}}\\
           & \iff S(CJ U_{\bar{a}}) = (CJ U_{\bar{a}}) S\\
           & \iff CJ U_{\bar{a}} = T_\psi \quad \text{ for some $\psi\in H^\infty$, due to Brown-Halmos \cite{BH} }.
       \end{align*}
       Since $C$ and $J$ are two conjugations on $H^2$, their multiplication $CJ (:= U$, say) is a linear operator. Moreover, since $U^* = JC$ and $UU^*=I=U^*U$, $U$ is unitary. Therefore, $CJU_{\bar{a}}$ as well as $T_\psi$ are unitary operators, and hence $\psi$ is a unimodular constant $\lambda$. Thus, we have $ CJ U_{\bar{a}} = \lambda \iff C= \lambda U_{\bar{a}} J,$ which is the anti-linear solution of the Equation \eqref{5e3}. 
       
       Also, since $C$ is an involution, $C^2 =I$, which immediately yields
       \begin{align*}
             &C^2=I\\
             &\implies \lambda U_{\bar{a}} J \lambda U_{\bar{a}} J = I\\
             & \implies |\lambda|^2 U_{\bar{a}} J U_{\bar{a}} J = I \\
             & \implies U_{\bar{a}} U_{{a}} = I \qquad (\text{since } J U_{\bar{a}} J=U_a)\\
             & \implies U_{\bar{a}} = U_a^{-1} = U_a.
       \end{align*} 
       This occurs if and only if $\bar{a}=a$. 
       Hence, the required conjugation is 
       $$ C= \lambda U_{\bar{a}} J,\quad \text{ for } a\in (-1,1) .$$ 

       Moreover, if there is some other unitary operator $V: H^2 \rightarrow H^2$ such that $V^*SV= T_{\mu_a},$ then the unitary operator $W= U_aV^*$ satisfies 
       \begin{align*}
           & WS = U_a V^* S = U_a T_{\mu_a} V^* = S U_{a} V^* = SW.
       \end{align*}
       Therefore, $W=T_\eta$ for some $H^\infty$ function $\eta$. And $W$ being unitary implies $\eta$ is a unimodular constant. So, $V= U_a$, (unique up to multiplication by some unimodular constant), proving the uniqueness of unitary equivalence of the shift $S$ and consequently, the uniqueness of the conjugation $C$.
       \vspace{0.1in}

       Conversely, if $C= \lambda U_a J$, then 
       \begin{align*}
            C(\1) & = \lambda U_a J(\1)\\
            & = U_a (\eta H^2) \qquad \text{(where $\eta=\theta^*$, an inner function)}\\
            & =( \eta \circ \mu_a)  U_a(H^2)\\
            & = (\eta \circ \mu_a) H^2.
       \end{align*}
       And it is straightforward to verify that $\eta \circ \mu_a$ is an inner function. This completes the proof. \hfill $\square$

       \begin{corollary}
           A conjugation $C$ on $H^2$ preserves coinvariance property, that is, $C(\2) \in \s(S^*)$ if and only if $C=  \lambda U_a J$, where $\lambda$ is a unimodular constant.
       \end{corollary}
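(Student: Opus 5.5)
The plan is to reduce the corollary to Theorem~\ref{5th1} through orthogonal complements. A conjugation $C$ maps $M^\perp$ onto $(CM)^\perp$. We will prove this first: if $g\perp M$, then for $f\in M$ we have $\langle Cg, Cf\rangle = \langle f, g\rangle = 0$, so $C(M^\perp)\subset (CM)^\perp$. Applying the same argument to $CM$ and using $C^2=I$ gives the reverse inclusion. Since $C$ is isometric and antilinear, $CM$ is a closed subspace whenever $M$ is closed. Hence $C(\2) = C\bigl((\1)^\perp\bigr) = \bigl(C(\1)\bigr)^\perp$ for every inner $\theta$.

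\emph{The condition is necessary.} Suppose $C(\2)\in\s(S^*)$ for every inner $\theta$, so $C(\2)=K_\eta$ for some inner $\eta$. Taking complements gives $C(\1) = (K_\eta)^\perp = \eta H^2\in \s(S)$. Thus $C$ maps every shift-invariant subspace to a shift-invariant subspace. The nontrivial lattices $\{0\}$ and $H^2$ are handled the same way, as the constant and zero inner cases. By Theorem~\ref{5th1}, $C=\lambda U_aJ$ with $a\in(-1,1)$ and $|\lambda|=1$.

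\emph{The condition is sufficient.} Let $C=\lambda U_aJ$. By the converse part of Theorem~\ref{5th1}, $C(\1)=(\theta^*\circ\mu_a)H^2$. Taking complements gives $C(\2)=K_{\theta^*\circ\mu_a}\in\s(S^*)$. We may also note this directly: $J(\2)=K_{\theta^*}$ by property~\ref{5p2}, and $U_a$ is unitary with $U_a(\eta H^2)=(\eta\circ\mu_a)H^2$, so $U_a(K_\eta)=K_{\eta\circ\mu_a}$.

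The main point needing care is the complement identity $C(M^\perp)=(CM)^\perp$ for antilinear isometric involutions, which uses the conjugate-symmetric inner product identity $\langle Cf,Cg\rangle=\langle g,f\rangle$. Once that is in place, the corollary follows immediately from Theorem~\ref{5th1}. As in the theorem, the range of $a$ should be read as $a\in(-1,1)$.
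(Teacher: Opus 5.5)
Your reduction is valid, and it is the paper's own route. The paper gives no separate proof, only the remark right after Theorem~\ref{5th1} that a conjugation, being an involution, carries coinvariance along with invariance. Your identity $C(M^\perp)=(CM)^\perp$ (from $\langle Cg,Cf\rangle=\langle f,g\rangle$ and $C^2=I$) is the precise form of that remark. The genuine problem is in what you reduce to: Theorem~\ref{5th1} is false as stated, so the corollary is false too, and your deduction inherits the error.

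\textbf{A counterexample.} For $|\omega|=1$ let $C_\omega f(z)=\overline{f(\bar\omega\bar z)}$, i.e.\ $C_\omega\bigl(\sum a_nz^n\bigr)=\sum \bar a_n\omega^n z^n$. This is a conjugation, and it is multiplicative. Hence $C_\omega(\theta H^2)=(C_\omega\theta)H^2$ with $C_\omega\theta$ inner, and by your complement identity $C_\omega(K_\theta)=K_{C_\omega\theta}$. But $C_\omega 1=1$, whereas $\lambda U_aJ1=\lambda\sqrt{1-|a|^2}/(1-\bar a z)$, which forces $a=0$ and $\lambda=1$. Then $C_\omega$ would equal $J$, yet $C_\omega z=\omega z\neq Jz$ for $\omega\neq 1$.

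\textbf{The listed operators are not conjugations.} For $a\in(-1,1)\setminus\{0\}$, $\lambda U_aJ$ is not a conjugation at all: $(\lambda U_aJ)^2 1=U_a(U_a1)=\frac{1-a^2}{1-2az+a^2}\neq 1$. The reason is that $\mu_a\circ\mu_a\neq \mathrm{id}$; for instance $\mu_a(\mu_a(0))=-2a/(1+a^2)$. In fact $U_a^{-1}=U_{-a}$. Redoing the paper's involution computation with this shows that $\lambda U_aJ$ is a conjugation exactly when $a\in i(-1,1)$, not when $a$ is real.

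\textbf{Where the proof of Theorem~\ref{5th1} fails.} It goes wrong in two places. First, $\dim\ker T_{\bar\theta}=1$ only gives $\theta=c\,\mu_a$ with $|c|=1$; for $C_\omega$ one gets $C_\omega S C_\omega=\omega S$. Second, the step $U_a^2=I$ is false.

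\textbf{What the argument actually proves.} Running the same argument correctly (Lemma~\ref{5l1}, the isometry and kernel-dimension steps, then the commutant of $S$) shows the following. A conjugation $C$ maps $\s(S)$ into $\s(S)$ if and only if $C=\lambda W_\varphi J$ for some automorphism $\varphi$ of $\D$ and unimodular $\lambda$. Here $W_\varphi f=(\varphi')^{1/2}(f\circ\varphi)$ is the unitary satisfying $W_\varphi S=T_\varphi W_\varphi$. Your complement argument would then transfer this corrected statement to $\s(S^*)$ verbatim.
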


       These conjugations readily commute with the orthogonal projection $P$, but not all commute with $S.$

       \begin{corollary}
            The only conjugation on $H^2$ that commutes with $S$ is $J$ (unique up to multiplication by a unimodular constant). 
       \end{corollary}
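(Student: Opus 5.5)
If $C$ commutes with $S$, then $CSC = S$. This means $C$ maps every shift-invariant subspace to a shift-invariant subspace. Indeed, if $SM \subset M$, then $S(CM) = CSM \subset CM$, and $CM$ is closed because $C$ is an isometry. So Theorem~\ref{5th1} applies and gives $C = \lambda U_a J$ with $a \in (-1,1)$ and $|\lambda| = 1$. It remains to show that the commutation forces $a = 0$, so that $U_0 = I$ and $C = \lambda J$.

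To pin down $a$, I compute $CSC$ for $C = \lambda U_a J$. First, $\lambda$ drops out: by antilinearity $(\lambda X)S(\lambda X) = \lambda\bar\lambda\, XSX$, so $CSC = U_a J S U_a J$. Next use $JS = SJ$ from property~\ref{5p1}. The proof of Theorem~\ref{5th1} gives $J U_a J = U_{\bar a}$, which equals $U_a$ since $a$ is real, and also $U_a S U_a = T_{\mu_a}$. Combining these,
\begin{align*}
CSC = U_a S (J U_a J) = U_a S U_a = T_{\mu_a}.
\end{align*}
The condition $CS = SC$ is therefore equivalent to $T_{\mu_a} = S$. Applying both sides to the constant function $1$ gives $\mu_a = z$, and evaluating at $z = 0$ gives $-a = 0$, so $a = 0$.

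Conversely, $J$ is a conjugation commuting with $S$ by property~\ref{5p1}, and so is $\lambda J$ for any unimodular $\lambda$, since $S$ is linear and hence $(\lambda J)S = \lambda SJ = S(\lambda J)$. This settles uniqueness up to a unimodular constant. The only points needing care are the bookkeeping of $\lambda$ under antilinearity and the identity $J U_a J = U_{\bar a}$. The latter follows directly from the definitions of $U_a$ and of $f^*$, using $(\mu_a)^* = \mu_{\bar a}$.
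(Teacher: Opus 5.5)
Your argument is internally consistent, but the two ingredients you import from Theorem~\ref{5th1} are wrong for $a\neq 0$. As written, that is a genuine gap, inherited from the paper.

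\textbf{The identity you use is false.} The map $\mu_a$ is not an involution: $\mu_a^{-1}=\mu_{-a}$, and for instance $\mu_a(\mu_a(0))=-2a/(1+|a|^2)$. Hence $U_a^{-1}=U_a^*=U_{-a}$, and the correct relation is $U_aSU_a^*=T_{\mu_a}$, not $U_aSU_a=T_{\mu_a}$. For real $a$ and $C=\lambda U_aJ$ you actually get $CSC=U_aSU_a=T_{\mu_a}U_a^{2}$.

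\textbf{The operators $\lambda U_aJ$ are not conjugations for $a\neq 0$.} One has $C^2=U_aU_{\bar a}=U_a^{2}\neq I$; already $U_a^{2}1=(1-a^2)/(1+a^2-2az)\neq 1$.

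\textbf{The classification misses genuine examples.} Take $|\omega|=1$, $\omega\neq 1$, and $Cf(z)=\overline{f(\overline{\omega z})}$. This is a conjugation. It sends $\theta H^2$ onto $\tilde\theta H^2$ with $\tilde\theta(z)=\overline{\theta(\overline{\omega z})}$ inner. It satisfies $CSC=\omega S$. It is not of the form $\lambda U_aJ$: testing on $1$ forces $a=0$ and $\lambda=1$, and then testing on $z$ gives $\omega z\neq z$. The proof of Theorem~\ref{5th1} loses the unimodular factor in $\phi=\gamma\mu_a$.

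So the step ``Theorem~\ref{5th1} applies and gives $C=\lambda U_aJ$'' cannot carry the proof. Your final evaluation at $0$ would survive the corrected formula, since $(T_{\mu_a}U_a^{2}1)(0)=-a(1-a^2)/(1+a^2)$ vanishes only for $a=0$. But it sits on top of a faulty classification.

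\textbf{The fix: skip the classification.} This is what the paper's ``last part of the proof with $a=0$'' amounts to.
\begin{enumerate}[label=(\arabic*)]
\item Set $W=CJ$. It is linear and unitary, with $W^{-1}=JC$.
\item Using $JS=SJ$ and your hypothesis, $WS=CJS=CSJ=SCJ=SW$.
\item By Brown--Halmos, $W=T_\psi$ for some $\psi\in H^\infty$.
\item Since $T_\psi$ is an isometry, $\psi$ is inner. Since $T_\psi$ is onto, $\psi H^2=H^2$. Together these force $\psi\equiv\lambda$ with $|\lambda|=1$.
\item Hence $C=WJ=\lambda J$.
\end{enumerate}
This needs neither Lemma~\ref{5l1} nor any property of $U_a$. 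Your converse paragraph then completes the proof.
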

         
          \begin{proof}
           The proof is straightforward from the last part of the above proof with $a=0$.
       \end{proof}

       The above discussion leads to the following remark.

       \begin{remark}
            The unique conjugation on $H^2$ satisfying both properties \ref{5p1} and \ref{5p2} (mentioned after Equation \eqref{5e2} in the introduction), is $J$.
       \end{remark}

       \section{Invariant to Coinvariant Conjugations}

       It is evident that when the model space $\2$ is finite-dimensional, then the impossibility of a conjugation $C$ on $H^2$ with $C(\2) = \eta H^2$ occurs. So, we consider a sub-collection of the lattice of $S$ and $S^*$ as:
        \begin{align*}
             \mathcal{M}(S) & = \{ M\in \s(S): dim(M) = \infty = dim(M^\perp)\}\\
            & = \{ M\in \s(S): M = \theta H^2 \text{ where } \theta \text{ is not a finite Blaschke product }\}\\
            & \hspace{2in} \text{ equivalently }\\
            & \{ M^\perp \in \s(S^*): M^\perp = \2 \text{ with } dim(\2) = \infty \} = \mathcal{M}(S^*).
        \end{align*}

        Now, let $M\in \mathcal{M}(S)$ and $N \in \mathcal{M}(S^*)$. Then $M=\eta H^2$ and $N=\2$ (with $\eta, \theta $ are not finite Blaschke). Now, consider the orthonormal basis as follows:
        \begin{align*}
            H^2 = \eta H^2 \oplus K_\eta & = \bigvee_{n=0}^\infty\{ \eta z^n \}  ~\oplus~ \bigvee_{m=0}^\infty{\{e_m\}}\\
            & = \bigvee _{n=0}^\infty \{ f_n \} ~ \oplus ~ \bigvee_{m=0}^\infty \{ \theta z^m \} = \2 \oplus \1.
        \end{align*}
        Define \begin{equation}\label{5eq}
            C_\eta^\theta: H^2 \rightarrow H^2
        \end{equation}
         as 
        $$ C_\eta^\theta(\alpha_n \eta z^n) = \overline{\alpha_n} f_n \quad \text{ and } \quad C_\eta^\theta(\beta_m e_m) = \overline{\beta_m} \theta z^m ,$$
        where $\alpha_n, \beta_m$ are constants. Then, it is straightforward to verify that $C_\eta^\theta$ is a conjugation on $H^2$ satisfying $C_\eta^\theta(\eta H^2) = \2$ and $C_\eta^\theta(K_\eta) = \1$. 

        Taking $\eta=\theta$ (not a finite Blaschke), the existence of a conjugation $C$ such that $C(\1)=\2=({\1})^\perp$ is guaranteed. But is the existence of a single conjugation possible for all $M\in \mathcal{M}(S)$?

        \begin{theorem}
            There is no conjugation on $H^2$ that satisfies $C(M) = M^\perp$ for all $M\in \mathcal{M}(S)$.
        \end{theorem}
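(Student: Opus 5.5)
We argue by contradiction, exploiting the fact that $\mathcal{M}(S)$ contains nested pairs. Suppose $C$ is a conjugation on $H^2$ with $C(M)=M^\perp$ for every $M\in\mathcal{M}(S)$. The basic idea is that $C$ preserves inclusions of subspaces, whereas $M\mapsto M^\perp$ reverses them. So we pick $M_1\subsetneq M_2$, both in $\mathcal{M}(S)$, and compare.

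Concretely, fix an inner function $\theta$ that is not a finite Blaschke product, for instance a singular inner function or an infinite Blaschke product. Set $M_2=\theta H^2$ and $M_1=z\theta H^2$. Since $z\theta$ is again not a finite Blaschke product, both subspaces lie in $\mathcal{M}(S)$, and $M_1\subsetneq M_2$. Because $C$ is a bijection, $C(M_1)\subset C(M_2)$, so by hypothesis $M_1^\perp\subset M_2^\perp$. Orthogonal complementation reverses inclusion, so $M_1^\perp\supset M_2^\perp$ as well. Hence $M_1^\perp=M_2^\perp$, and therefore $M_1=M_2$ (both are closed). This says $z\theta H^2=\theta H^2$, i.e. $H^2=zH^2$, which is absurd since the constant $1\notin zH^2$. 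Equivalently, $K_{z\theta}=K_\theta\oplus\mathbb{C}\,\theta\neq K_\theta$.

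As an alternative route, one could use $M\cap M^\perp=\{0\}$ together with $C(M)=M^\perp$ and apply the hypothesis to $M_1\subset M_2$ to obtain $M_2^\perp\subset M_1^\perp$ and $M_1^\perp = C(M_1)\subset C(M_2)=M_2^\perp$. This is the same argument, so the one above suffices.

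No step here is a serious obstacle. The only point needing care is checking that both chosen subspaces genuinely belong to $\mathcal{M}(S)$, meaning both $M$ and $M^\perp$ are infinite-dimensional. This holds because $\theta$ and $z\theta$ are not finite Blaschke products, so $K_\theta$ and $K_{z\theta}$ are infinite-dimensional. Antilinearity of $C$ plays no role; only bijectivity and the preservation of set inclusion are used.
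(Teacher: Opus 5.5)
Your proof is correct and is essentially the paper's argument: the paper also takes the nested pair $\eta\theta H^2\subset\theta H^2$ in $\mathcal{M}(S)$ (with $\eta$ an arbitrary non-constant inner function, where you take $\eta=z$) and observes that $C$ forces $K_{\eta\theta}\subset K_\theta$ while $K_\theta\subset K_{\eta\theta}$, which is impossible. Your version has the small merit of spelling out why this fails, via $K_{z\theta}=K_\theta\oplus\mathbb{C}\,\theta$, and of checking explicitly that $z\theta$ is not a finite Blaschke product; the paper states the impossibility without comment.
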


        \begin{proof}
            If possible, let $C: H^2 \rightarrow H^2$ be a conjugation such that $C(M) = M^\perp,$ for every $M\in \mathcal{M}(S)$. Also, let $\eta$ be a non-constant inner function. Then, both $\theta H^2$ and $\eta\theta H^2$ belong to $\mathcal{M}(S)$. 
            
            Since $\theta \eta H^2 \subset \theta H^2$, then it follows that $\2 \subset K_{\eta\theta}$. But
            \begin{align*}
                 \theta \eta H^2 \subset \theta H^2  \implies C(\theta \eta H^2) \subset C(\theta H^2)  \implies K_{\eta \theta} \subset K_\theta,
            \end{align*}
            which is an impossibility.
        \end{proof}

        Now, we investigate the possibility of a conjugation in $H^2$ such that $C(M)\in \m(S^*)$ for every $M\in \m (S)$. To do so, we make use of the following Lemma, which is a slight modification of Lemma \ref{5l1}.

        \begin{lemma}\label{5l2}
            Suppose $T: H^2 \rightarrow H^2$ is a bounded linear operator such that $T(\1) \subset \1$ for every inner function $\theta$ but a finite Blaschke product. Then $T$ is an analytic Toeplitz operator.
        \end{lemma}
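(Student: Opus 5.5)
We follow the structure of the proof of Lemma~\ref{5l1}: we show that $T$ commutes with $S$ and then invoke Theorem~7 of Brown--Halmos \cite{BH}. What changes is how eigenvectors of $T^*$ are produced. The hypothesis no longer covers the one-dimensional model spaces $K_{\mu_a}$, so the kernels $v_a$ cannot be used directly. Instead we recover invariance of $T$ on the finite-codimension subspaces $\mu_a H^2$ by writing each one as a closed span of subspaces $\theta H^2$ with $\theta$ not a finite Blaschke product.

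\textbf{Step 1.} Fix $a\in\D$ and pick an inner function $\theta_a$ that is not a finite Blaschke product and vanishes at $a$. Either an infinite Blaschke product with zeros $a=z_1,z_2,\dots$ and $\sum(1-|z_k|)<\infty$ works, or $\mu_a\, s$ with $s$ a singular inner function. Every $\mu_a u$ with $u$ inner and not a finite Blaschke product satisfies $T(\mu_a u H^2)\subset \mu_a u H^2\subset \mu_a H^2$.

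We claim
$$\bigvee\{\mu_a u H^2 : u \text{ inner, not a finite Blaschke product}\} = \mu_a H^2 .$$
To prove it, take the singular inner functions $u_t(z)=\exp\!\big(-t\frac{1+z}{1-z}\bigr)$ for $t>0$. The closed span of $u_t H^2$ over $t>0$ is $S$-invariant, so by Beurling it equals $\phi H^2$, where $\phi$ is the greatest common inner divisor of the $u_t$. Since $u_t\to 1$ locally uniformly as $t\to0$, we get $\phi$ constant, and the span is $H^2$. Multiplying by $\mu_a$, which is an isometry, gives the claim. Since $T$ is bounded and linear, it follows that $T(\mu_a H^2)\subset\mu_a H^2$.

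\textbf{Step 2.} Taking orthogonal complements as in Lemma~\ref{5l1} gives $T^*(K_{\mu_a})\subset K_{\mu_a}=\mathbb{C}\{v_a\}$. Hence $T^*v_a=\lambda_a v_a$ for every $a\in\D$. The rest is verbatim: $T^*S^*v_a=\lambda_a a v_a=S^*T^*v_a$. Because the kernels span $H^2$, we get $T^*S^*=S^*T^*$, that is, $TS=ST$, and Brown--Halmos \cite{BH} shows that $T$ is an analytic Toeplitz operator.

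\textbf{Main obstacle.} The only non-routine point is the density claim in Step~1, namely that the excluded finite Blaschke products are recovered as closed spans of allowed subspaces. The gcd argument requires a little care. If $\phi$ divides every $u_t$, then $|\phi|\ge|u_t|$ on $\D$ for all $t$, and letting $t\to 0$ gives $|\phi|\ge 1$ on $\D$. Since $\phi$ is inner, $|\phi|\le 1$, so $\phi$ is a unimodular constant.

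A more direct alternative avoids Beurling. For fixed $f\in H^2$, the function $u_t f$ converges to $f$ in $H^2$ as $t\to0$ by dominated convergence on $\mathbb{T}$, because $|u_t|\le 1$ and $u_t\to 1$ a.e. Hence $T(\mu_a f)=\lim_{t\to 0} T(\mu_a u_t f)\in \mu_a H^2$, since $\mu_a H^2$ is closed. We would present this version.
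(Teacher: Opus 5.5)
Your proof is correct, but it reaches the key step by a different route than the paper.

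Both arguments have the same overall shape. Each shows that $T$ leaves invariant the finite-codimensional subspaces $BH^2$ ($B$ a finite Blaschke product) that the hypothesis excludes. Each then runs the eigenvector/commutation argument of Lemma~\ref{5l1}.

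The paper gets the missing invariance from closed spans. Take an infinite Blaschke product $\theta$ and a singular inner function $\eta$; they are coprime. The Beurling/gcd description of spans of invariant subspaces then gives $\overline{\mathrm{span}}(B\theta H^2, B\eta H^2)=\gcd(B\theta,B\eta)H^2=BH^2$. So two fixed allowed subspaces suffice. The paper thereby obtains invariance of every member of $\mathcal{L}at(S)$ and quotes Lemma~\ref{5l1} as a black box.

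You instead use an approximation. By dominated convergence, $u_t f\to f$ in $H^2$. Hence $\mu_a f$ is a limit of the vectors $\mu_a u_t f$, and their images under $T$ all lie in the closed space $\mu_a H^2$.

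What your route buys:
\begin{itemize}
\item It is more elementary. It needs no Beurling theorem, no gcd of inner functions, and no coprimality of Blaschke and singular factors.
\item It works verbatim with $\mu_a$ replaced by any finite Blaschke product $B$. You could therefore cite Lemma~\ref{5l1} directly instead of re-running its proof.
\item Restricting to $\mu_a$ is nonetheless enough, since only the one-dimensional spaces $K_{\mu_a}$ enter that proof.
\end{itemize}

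One cosmetic point, inherited from the paper: the correct eigenvalue relation is $S^*v_a=\bar a\,v_a$, not $S^*v_a=a\,v_a$. This does not affect the identity $T^*S^*v_a=S^*T^*v_a$.
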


        \begin{proof}
            Let $\theta$ be an infinite Blaschke product, and $\eta$ be a singular inner function. Also, let $B_n$ be any finite Blaschke product. Then both $\phi \1$ and $\phi \eta H^2$ are in $\mathcal{M}(S)$, and they are invariant under the action of $T$.
            
            Then,
            \begin{align*}
                & T(\overline{span}(B_n \1, B_n \eta H^2)) \subset \overline{span}(B_n \1, B_n \eta H^2)\\
                & \implies T(B_n H^2) \subset B_n H^2 \quad \text{ because }\\
                & \overline{span}(B_n \1, B_n \eta H^2) = gcd(B_n \theta, B_n \eta) H^2 = B_n H^2 \quad (gcd(\theta, \eta)=1).
            \end{align*}

            Therefore, $T(M) \subset M,$ for every $M\in \s(S)$, and hence by Lemma~\ref{5l1}, $T$ is an analytic Toeplitz operator.
        \end{proof}

        \begin{theorem}
            There is no conjugation $C$ in $H^2$ that holds $C(\m(S)) = \m(S^*)$.
        \end{theorem}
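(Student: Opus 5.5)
Suppose, for contradiction, that $C$ is a conjugation on $H^2$ with $C(\m(S)) = \m(S^*)$, i.e.\ $C(M)$ is an infinite-dimensional model space for every $M\in\m(S)$. The plan is to pass to orthogonal complements and reduce to Lemma~\ref{5l2}. The first step is to note that an antilinear isometric involution preserves orthogonality: if $f\perp M$ and $g\in M$, then $\langle Cf, Cg\rangle = \langle g,f\rangle =0$. Since $C$ is also a bijection, $C(M^\perp) = C(M)^\perp$. Hence for $M=\theta H^2\in\m(S)$ we get $C(K_\theta) = C(\theta H^2)^\perp$, and $C(\theta H^2)=K_{\eta}$ for some inner $\eta$ that is not a finite Blaschke product. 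Therefore $C(K_\theta)=\eta H^2\in \m(S)$. Because $\m(S^*)$ is exactly the set of complements of members of $\m(S)$, the map $M\mapsto C(M^\perp)$ sends $\m(S)$ into $\m(S)$. Applying $C$ once more and using $C^2=I$ shows that this map is onto $\m(S)$.

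Next, set $M' = C(M^\perp)$ and consider the linear operator $T = CS^*C$. If $M\in\m(S)$, then $M^\perp$ is $S^*$-invariant, so
\[
T(M') = CS^*C\,C(M^\perp) = C S^*(M^\perp)\subset C(M^\perp)=M'.
\]
Thus $T$ leaves invariant every $\theta H^2$ with $\theta$ not a finite Blaschke product, since each such space is of the form $M'$ by the surjectivity above. By Lemma~\ref{5l2}, $T=T_\phi$ for some $\phi\in H^\infty$.

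For the contradiction, observe that $T=CS^*C$ has a one-dimensional kernel, namely $C(\ker S^*)=C(\mathbb{C}\,1)$, and that it is a co-isometry, since $TT^* = CS^*SC = I$. On the other hand, a nonzero analytic Toeplitz operator $T_\phi$ is injective: if $\phi f=0$ with $\phi\not\equiv 0$, then $f=0$ because $H^2$ functions cannot vanish on a set of positive measure. And $T_\phi\neq 0$ here because $TT^*=I$. This contradicts $\dim\ker T = 1$. Alternatively, $T_\phi T_\phi^* = I$ forces $T_\phi$ to be an isometry onto $H^2$, hence unitary, contradicting the nontrivial kernel.

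The main obstacle is the first step: checking that the hypothesis $C(\m(S))=\m(S^*)$ really gives invariance of \emph{every} member of $\m(S)$ under $CS^*C$. This is exactly where surjectivity and $C^2=I$ are needed, together with the orthogonality-preservation identity $C(M^\perp)=C(M)^\perp$ for antilinear isometries. Once that is secured, the remaining steps are immediate from Lemma~\ref{5l2} and elementary properties of analytic Toeplitz operators.
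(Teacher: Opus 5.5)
Your proof is correct and follows the paper's route: conjugate the shift by $C$ and use the hypothesis to show that $CS^*C$ leaves every member of $\mathcal{M}(S)$ invariant (the paper phrases this as $CSC$ leaving every member of $\mathcal{M}(S^*)$ invariant and passes to adjoints implicitly), then invoke Lemma~\ref{5l2}. The differences are in the details: your involution argument for $M\mapsto C(M^\perp)$ spells out a step the paper leaves tacit and shows that the inclusion $C(\mathcal{M}(S))\subseteq\mathcal{M}(S^*)$ already suffices, and you finish by setting the one-dimensional kernel of $CS^*C$ against the injectivity of a nonzero analytic Toeplitz operator, whereas the paper uses that $CSC=T_{\bar\phi}$ is an isometry, so $\bar\phi$ is inner and $\phi$ must be constant.
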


        \begin{proof}
             Let $M\in \mathcal{M}(S)$. Then $S(M) \subset M$. If possible, let $C: H^2 \rightarrow H^2$ be a conjugation such that $C$ maps $\mathcal{M}(S)$ onto $\mathcal{M}(S^*).$

        Then, $CSC(CM) \subset (CM)$ and $C(M) \in \mathcal{M}(S^*).$ Therefore, by Lemma~\ref{5l2}, we have
        \begin{align*}
            & CSC = T_{\bar{\phi}} \qquad \text{ for some $\phi \in H^\infty$ }.
        \end{align*}

        But, $CSC$ is an isometry on $H^2$, so is $T_{\bar{\phi}}$, and then $\bar{\phi}$ is an inner function. Also, $\phi \in H^\infty$ implies $\phi$ is a unimodular constant. This is an impossibility, hence proved.
        \end{proof}

        \begin{corollary}\label{5c1}
            There is no conjugation $C$ on $H^2$ that satisfies the Hankel type equation $S^*C = CS$.
        \end{corollary}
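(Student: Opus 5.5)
Suppose, towards a contradiction, that $C$ is a conjugation on $H^2$ with $S^*C=CS$. Since $C^2=I$, this is equivalent to $CSC=S^*$, and taking adjoints (using $(CTC)^*=CT^*C$ for a conjugation $C$) also gives $CS^*C=S$. I will get a contradiction in two independent ways: a norm argument, and then a check that this is the case the preceding theorem already rules out.

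\emph{Norm argument.} A conjugation is isometric, since $\|Cf\|^2=\langle Cf,Cf\rangle=\langle f,f\rangle$. Hence for every $h\in H^2$
$$\|S^*h\|=\|CSCh\|=\|SCh\|=\|Ch\|=\|h\|,$$
so $S^*$ would be an isometry. But $S^*1=0$ while $\|1\|=1$, which is a contradiction. Equivalently, $\ker(CSC)=C(\ker S)=\{0\}$, whereas $\ker S^*=\mathbb{C}\{1\}$ is nonzero.

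\emph{Link with the preceding theorem.} Let $M=\1$ with $\theta$ not a finite Blaschke product, so $M\in\m(S)$. Since $S^*(C M)=C S(M)\subset CM$, the subspace $C(M)$ is $S^*$-invariant, i.e.\ $C(M)\in\s(S^*)$. Because $C$ is an antilinear isometric bijection, it preserves dimensions and maps $M^\perp$ onto $(CM)^\perp$. Both $M$ and $M^\perp$ are infinite-dimensional, so $C(M)\in\m(S^*)$. Applying the same argument to $C$ and using $C^2=I$ shows that $C$ maps $\m(S)$ onto $\m(S^*)$. This is exactly the situation that the preceding theorem excludes.

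There is no genuine obstacle here. The only point needing care is the adjoint identity for antilinear maps, and the norm argument does not use it at all.
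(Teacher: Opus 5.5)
Your proof is correct and is essentially the paper's argument. The paper rewrites the equation as $CSC=T_{\bar z}=S^*$ and appeals to the preceding proof: $CSC$ is an isometry, so it cannot equal the non-isometric $T_{\bar z}$. Your norm argument is exactly this, made concrete by $S^*1=0$, and your second route (checking that $C$ maps $\mathcal{M}(S)$ onto $\mathcal{M}(S^*)$ and invoking the preceding theorem) is also valid but not needed.
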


        \begin{proof}
            $CS = S^*C \iff CSC = T_{\bar{z}}$, and the rest follows from the above proof.
        \end{proof}

        However, instead of $H^2$, if the conjugation $C$ in Corollary \ref{5c1} is considered on $\2$, then the Hankel type intertwining equation $ S_\theta^* C = CS_\theta $ possesses non-trivial solution, one of them is $C_\theta: \2 \rightarrow \2$, given by $C_\theta(f)=\theta\bar{z}\bar{f}$. Note that it is just the compression of the conjugation $C_\theta: L^2 \rightarrow L^2$, given by $C_\theta(f)=\theta\bar{z}\bar{f}$, which admits the following properties. 
     \begin{enumerate}[label=(\roman*)] 
         \item $C_\theta (\2)=\2 \quad \text{and}\quad C_\theta( {\2}^\perp ) = {\2}^\perp$
         \item ${(A_\phi^\theta)}^* = C_\theta A_\phi^\theta C_\theta \quad \text{and} \quad {(D_\phi^\theta)}^* = C_\theta D_\phi^\theta C_\theta,$
     \end{enumerate}
      where $A_\phi^\theta$ is a truncated Toeplitz operator (TTO) (introduced by Sarason \cite{S3}), and $D_\phi^\theta$ is a dual truncated Toeplitz operator (DTTO) (by Ding-Sang \cite{DS}, also see \cite{CKLP3} by C\^arama et al and \cite{G} by Gu for characterization).
      \vspace{0.1in}

      Recently, Gu-Ma in \cite{GM}, classified the intertwining linear operators $I(S_\theta, S_\theta^*):= \{  X\in \mathcal{B}(\2): S_\theta^* X = X S_\theta \}$ as well as $I(S_\theta^*,S_\theta)$ (an independent study of $I(S_\theta, S_\theta^*)$ has been done by B-Bala-Panja-Sarkar in \cite{BBPS}). 
      \vspace{0.1in}
      
      Now, a natural question that arises is: \emph{Find the anti-linear solution to the intertwining equations $S_\theta^* X = X S_\theta$ and $S_\theta X = X S_\theta^*$}.
      \vspace{0.1in}
      
     Exploiting the complex-symmetric nature of the compressed shift $S_\theta$, we obtain the following observation.

     \begin{theorem}
         The anti-linear operator satisfying both the operator equation $S_\theta^* X = X S_\theta$ and $S_\theta X = X S_\theta^*$ is precisely of the form $\lambda C_\theta$, for $\lambda \in \mathbb{C}$.
     \end{theorem}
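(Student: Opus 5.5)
Since $C_\theta$ is a conjugation on $K_\theta$ with $C_\theta S_\theta C_\theta = S_\theta^*$, I will reduce the anti-linear problem to a linear one by composing with $C_\theta$. Given an anti-linear bounded $X$ on $K_\theta$ satisfying both equations, set $Y := C_\theta X$, which is linear and bounded. From $S_\theta^* X = X S_\theta$ we get
\[
Y S_\theta = C_\theta X S_\theta = C_\theta S_\theta^* X = S_\theta C_\theta X = S_\theta Y ,
\]
using $C_\theta S_\theta^* = S_\theta C_\theta$. Similarly, from $S_\theta X = X S_\theta^*$ we get
\[
Y S_\theta^* = C_\theta X S_\theta^* = C_\theta S_\theta X = S_\theta^* C_\theta X = S_\theta^* Y .
\]
So $Y$ commutes with both $S_\theta$ and $S_\theta^*$; that is, $Y$ lies in the commutant of the von Neumann algebra $W^*(S_\theta)$.

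The key step is to show that $S_\theta$ is irreducible, so that $Y$ must be a scalar. I would first note that $I - S_\theta S_\theta^*$ is the rank-one projection onto $\mathbb{C}\,k_0^\theta$, where $k_0^\theta = 1 - \overline{\theta(0)}\theta$, since the defect of $S_\theta^*$ is one-dimensional. Then $Y$ commutes with this projection, so $Y k_0^\theta = c\, k_0^\theta$ for some $c \in \mathbb{C}$. Next, $k_0^\theta$ is a cyclic vector for $S_\theta$: the closed span of $\{P_\theta z^n\}$ is all of $K_\theta$, and $S_\theta^n k_0^\theta = P_\theta z^n$ because $k_0^\theta = P_\theta 1$ and $P_\theta$ annihilates $\theta H^2$. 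Hence
\[
Y S_\theta^n k_0^\theta = S_\theta^n Y k_0^\theta = c\, S_\theta^n k_0^\theta
\]
for all $n$, and by density $Y = cI$. Therefore $X = C_\theta Y = \bar c\, C_\theta$ (since $C_\theta$ is anti-linear, $C_\theta (cI) = \bar c\, C_\theta$). Writing $\lambda = \bar c$ gives $X = \lambda C_\theta$.

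For the converse, I will check directly that $\lambda C_\theta$ satisfies both equations. The identities $C_\theta S_\theta = S_\theta^* C_\theta$ and $C_\theta S_\theta^* = S_\theta C_\theta$ are the $C_\theta$-symmetry of the truncated Toeplitz operator $S_\theta = A_z^\theta$, which is property (ii) above. Multiplying by any $\lambda \in \mathbb{C}$ preserves both equations.

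The main obstacle is the irreducibility step, specifically verifying that $k_0^\theta$ is cyclic and that $I - S_\theta S_\theta^*$ has rank one. This requires recalling $S_\theta^* = S^*|_{K_\theta}$ and the standard computation $S_\theta S_\theta^* f = f - \langle f, k_0^\theta\rangle k_0^\theta / \|k_0^\theta\|^2$. I would cite this as known from Sarason's theory of the compressed shift rather than rederive it. The case where $\theta$ is a finite Blaschke product needs no separate treatment, since the same argument works in finite dimensions.
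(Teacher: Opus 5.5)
Your proof is correct, and it takes a different route from the paper's.

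\textbf{The paper's route.} The paper treats the two equations separately. From $S_\theta(C_\theta X)=(C_\theta X)S_\theta$ it invokes Sarason's commutant theorem. This gives $X=C_\theta A^\theta_\phi$ ($\phi\in H^\infty$) for the first equation, and $X=A^\theta_\phi C_\theta=C_\theta A^\theta_{\bar\phi}$ for the second. It then asserts without argument that the two families intersect only in $\{\lambda C_\theta\}$. That assertion amounts to saying that an operator which is both an analytic and a co-analytic truncated Toeplitz operator is a scalar. Proving it requires either irreducibility of $S_\theta$ or Sarason's criterion $A^\theta_\phi=0 \iff \phi\in\theta H^2+\overline{\theta H^2}$.

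\textbf{Your route.} You combine the two equations at the outset, so that $Y=C_\theta X$ commutes with both $S_\theta$ and $S_\theta^*$. You then prove irreducibility directly from the rank-one defect and the cyclic vector $k^\theta_0$.

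\textbf{Comparison.} Your argument is more elementary and self-contained: it never uses the commutant lifting theorem. It also supplies exactly the step the paper's intersection claim leaves implicit. In exchange, the paper's route describes the anti-linear solutions of each equation individually, which is the question it poses just before the theorem. Your argument yields only the joint solution set.

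\textbf{One small correction.} For $f\in K_\theta$ one has $(I-S_\theta S_\theta^*)f=f(0)\,k^\theta_0=\langle f,k^\theta_0\rangle k^\theta_0$, with no division by $\|k^\theta_0\|^2=1-|\theta(0)|^2$. So $I-S_\theta S_\theta^*=k^\theta_0\otimes k^\theta_0$, which is a projection only when $\theta(0)=0$. This does not affect your argument. You only use that its range is $\mathbb{C}k^\theta_0\neq\{0\}$, and any $Y$ commuting with it leaves that range invariant, so $Yk^\theta_0\in\mathbb{C}k^\theta_0$.
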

     \begin{proof}
     Let $X$ be the anti-linear operator such that $S_\theta^* X = X S_\theta$. Then, we have
         \begin{align*}
              S_\theta^* X = X S_\theta & \iff C_\theta S_\theta C_\theta X = X S_\theta\\
              & \iff S_\theta (C_\theta X) = (C_\theta X) S_\theta.
         \end{align*}
         Therefore, by Sarason's commutant theorem, we have $C_\theta X = A_\phi^\theta$ for some $\phi\in H^\infty$. Thus, the anti-linear solution to $S_\theta^* X = X S_\theta$ is $ C_\theta A_\phi^\theta $, for $\phi \in H^\infty$.

         Proceeding similarly, we get that the anti-linear solution to the operator equation $S_\theta X = X S_\theta^*$ is $ A_\phi^\theta C_\theta$, for $\phi \in H^\infty$.
         
         Now, it follows that
         \begin{align*}
             & \{ C_\theta A_\phi^\theta: \phi\in H^\infty \} \cap \{ A_\phi^\theta C_\theta: \phi\in H^\infty \}\\
             & = \{ C_\theta A_\phi^\theta: \phi\in H^\infty \} \cap \{C_\theta A_{\bar{\phi}}^\theta: \phi\in H^\infty \} \\
             & = \{ \lambda C_\theta: \lambda\in \mathbb{C} \}.
         \end{align*} Hence the conclusion follows.
    \end{proof}

    As an immediate consequence, we have the following.
    \begin{corollary}
        If $X$ in the above theorem is a conjugation, then $X=\lambda C_\theta$, where $\lambda$ is a unimodular constant, is the only solution.
    \end{corollary}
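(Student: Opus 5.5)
By the preceding theorem, any anti-linear $X$ on $\2$ that satisfies both $S_\theta^* X = X S_\theta$ and $S_\theta X = X S_\theta^*$ has the form $X=\lambda C_\theta$ for some $\lambda\in\mathbb{C}$. So the plan is to show that the conjugation axioms force $|\lambda|=1$, and then check that every such $\lambda C_\theta$ really is a conjugation solving both equations. We may assume $\2\neq\{0\}$, i.e.\ $\theta$ is not a unimodular constant; otherwise the statement is vacuous.

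First the necessity. Suppose $X=\lambda C_\theta$ is a conjugation. The map $C_\theta(f)=\theta\bar z\bar f$ is a conjugation on $\2$, since it is the compression of the $L^2$ conjugation with property (i) above. Isometry then gives, for any nonzero $f\in\2$,
$$\|f\|=\|Xf\|=|\lambda|\,\|C_\theta f\|=|\lambda|\,\|f\|,$$
so $|\lambda|=1$. The involution condition is then consistent. Because of anti-linearity, $(\lambda C_\theta)(\lambda C_\theta)f=\lambda\bar\lambda C_\theta^2 f=|\lambda|^2 f$, so $X^2=I$ also forces $|\lambda|=1$ and is satisfied once it holds. The only care needed here is to pull $\lambda$ through the anti-linear $C_\theta$ as $\bar\lambda$.

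Next the sufficiency. For unimodular $\lambda$, $\lambda C_\theta$ is anti-linear and isometric in the sense $\langle \lambda C_\theta f,\lambda C_\theta g\rangle=|\lambda|^2\langle C_\theta f,C_\theta g\rangle=\langle g,f\rangle$. It is an involution by the computation above. It satisfies both intertwining equations, because $S_\theta$ is $C_\theta$-symmetric: by property (ii) with $\phi=z$, $C_\theta S_\theta C_\theta=S_\theta^*$. This gives $S_\theta^*C_\theta=C_\theta S_\theta$ and, taking adjoints or applying $C_\theta$ on both sides, $S_\theta C_\theta=C_\theta S_\theta^*$. Multiplying by the scalar $\lambda$ commutes with the linear operators $S_\theta$ and $S_\theta^*$, so both equations persist.

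No step is a real obstacle, since the corollary follows directly from the theorem. The only delicate points are bookkeeping: scalars conjugate when they pass through an anti-linear map, and the fact that $C_\theta$ maps $\2$ into itself, which property (i) supplies, is needed for the compression to be a conjugation on $\2$.
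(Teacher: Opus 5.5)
Your proof is correct and is essentially the immediate consequence the paper has in mind (it gives no separate proof): the preceding theorem yields $X=\lambda C_\theta$, and the isometry or involution requirement forces $|\lambda|=1$. Your converse check is also right: for unimodular $\lambda$, $\lambda C_\theta$ is a conjugation satisfying both intertwining equations via $C_\theta S_\theta C_\theta=S_\theta^*$, which fills in the sufficiency half of ``is the only solution'' that the paper leaves implicit.
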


     \section*{Special Remark}

     We end this article with an observation on certain conjugation operators in $L^2$.  We introduce the notion of an \emph{orthogonally-preserving} conjugation, that is, a conjugation $C: \h \rightarrow \h$ satisfying $C(M) = M^\perp$ for a closed subspace $M$ of $\h$; for example $C_\theta^\theta$ in $H^2$ (see \eqref{5eq}), $V(f)=\bar{z}\bar{f}$ on $L^2$. We then relate this notion to the $ M_z$-conjugations, $M_z$-commuting conjugations introduced by C\^amara et al in \cite{CKLP1, CKLP2, CKP}, and obtain an interesting result.
     
        \begin{remark}
         Let $C$ be a $M_z-$conjugation in $L^2$ that orthogonally preserves $H^2,$ then $C= \lambda V$, for some unimodular constant $\lambda$. Also, there is no $M_z-$commuting conjugation in $L^2$ that orthogonally preserves $H^2$.
     \end{remark}

     \begin{proof}
         Due to Theorem~2.2 and Theorem~2.4 of \cite{CKLP1}, $M_z C_1 = C_1 M_{\bar{z}}$ if and only if $C_1(f) = M_\phi \bar{f}$ and $C_2 M_z = M_z C_2$ if and only if $C_2(f) = M_\psi f^*$ (see \eqref{5e2}), where $\phi,\psi \in \li$ and $|\phi|=1=|\psi|$.

         Let us now consider $C_1(H^2) = \ho.$ Then $\phi = M_\phi \bar{1} = C_1(1) \in \ho.$ Also, $C(\ho) = H^2$ implies that for $n\geq1,$ $\langle \phi {z}^n, \bar{z}\rangle = 0 \implies \langle \phi, \bar{z}^{n+1} \rangle =0$ thus $\phi \in \vee\{\bar{z}\}$. Now $C_1^2 = I$ gives that $\phi = \lambda \bar{z},$ for some $\lambda \in \mathbb{C},$ with $|\lambda| =1$.

         Now, consider $C_2(H^2) = \ho.$ Then similarly as above $\psi \in \ho.$ And for $m\geq 0,$ we have $\langle \phi z^m, 1\rangle = 0 \implies \langle \phi, \bar{z}^m \rangle = 0 \implies \phi=0,$ which is a contradiction.
     \end{proof}

   \section*{{{Acknowledgment}}}

            A. Chattopadhyay is supported by the Core Research Grant (CRG), File No: CRG/2023/004826, by the Science and Engineering Research Board(SERB), Department of Science \& Technology (DST), Government of India. S. Jana gratefully acknowledges the support provided by IIT Guwahati, Government of India.


   \bibliographystyle{abbrv}
   
    \bibliography{blg}

    \vspace{0.2in}

	\noindent  [ A. Chattopadhyay ] Department of Mathematics, Indian Institute of Technology Guwahati, Guwahati, 781039, India.\\
		\textit{Email address:}~ arupchatt@iitg.ac.in, 2003arupchattopadhyay@gmail.com.

        \vspace{0.1in}

    \noindent [ S. Jana ] Department of Mathematics, Indian Institute of Technology Guwahati, Guwahati, 781039, India.\\
		\textit{Email address:}~ supratimjana@iitg.ac.in, suprjan.math@gmail.com.
  
\end{document}